\theoremstyle{plain}
\newtheorem{thm}{Theorem}[section]
\newtheorem{lem}[thm]{Lemma}
\newtheorem{prob}[thm]{Problem}
\theoremstyle{definition}
\newtheorem{eg}[thm]{Example}
\theoremstyle{remark}
\newtheorem{rem}[thm]{Remark}
\newtheorem{claim}[thm]{Claim}
\DeclareMathOperator{\Def}{Def}
\DeclareMathOperator{\Spec}{Spec}
\begin{document}

\title
[Deformations of weak Fano manifolds ]
{ Unobstructedness of deformations \\ of weak Fano manifolds}
\subjclass[2010]{Primary 14D15, 14J45; Secondary 14B07}
\keywords{deformation theory, Kuranishi space, weak Fano manifolds}

\author{Taro Sano}
\address{Mathematics Institute, Zeeman Building, University of Warwick, Coventry, CV4 7AL, UK}
\email{T.Sano@warwick.ac.uk}

\dedicatory{Dedicated to Professor~Miles~Reid on the~occasion of his~65th~birthday.}

\maketitle

\begin{abstract}
We prove that a weak Fano manifold has unobstructed deformations. 
For a general variety, we investigate conditions under which a variety is necessarily obstructed. 
\end{abstract}

\tableofcontents

 \section{Introduction}
 We consider algebraic varieties over an algebraically closed field $k$ of characteristic zero. 

The Kuranishi space of a smooth projective variety has bad singularities in general.  
Even in the surface case, Vakil \cite{vakil} exhibited several examples of smooth projective surfaces of general type with arbitrarily singular Kuranishi spaces. 

On the other hand, in some nice situations, the Kuranishi space is smooth. 
A famous result is that the Kuranishi space of a Calabi-Yau manifold is smooth. 
The Kuranishi space of a Fano manifold $X$ is also smooth since $H^2(X, \Theta_X)=0$ by the Kodaira--Nakano vanishing theorem, 
where $\Theta_X$ is the tangent sheaf of $X$. 

In this paper, we look for several nice projective manifolds with smooth Kuranishi space.

A smooth projective variety $X$ is called a {\it weak Fano manifold} if the anticanonical divisor $-K_X$ is nef and big.  
The following is the main theorem of this paper. 

\begin{thm}\label{wfanothm}
 Deformations of a weak Fano manifold are unobstructed. 
\end{thm}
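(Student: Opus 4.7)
The plan is to apply Kawamata's $T^1$-lifting criterion for unobstructedness. By that criterion, unobstructedness of deformations of $X$ is equivalent to the following: for every $n \geq 0$ and every infinitesimal deformation $\mathcal{X}_n \to \Spec A_n$ of $X$ over $A_n := k[t]/(t^{n+1})$, the $A_n$-module $H^1(\mathcal{X}_n, \Theta_{\mathcal{X}_n/A_n})$ is free. By cohomology and base change, this freeness is governed by the behaviour of $H^2$ on the central fibre; in particular, it suffices to show that the restriction map $H^2(\mathcal{X}_n, \Theta_{\mathcal{X}_n/A_n}) \otimes k \to H^2(X, \Theta_X)$ is injective for every $n$, so that the obstruction to extending a given deformation one order further always pulls back to zero.

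To implement this I would first exploit the fact that nef-and-bigness of $-K$ is an open condition in flat families, so every infinitesimal deformation $\mathcal{X}_n$ is again a weak Fano family. Consequently the Kawamata--Viehweg vanishings available on $X$, most notably $H^i(X, \mathcal{O}_X(-mK_X)) = 0$ for $i > 0$ and $m \geq 0$, extend order by order to $\mathcal{X}_n$, and can be fed into the cohomology of $\Theta_{\mathcal{X}_n/A_n}$. The technical core is Hodge-theoretic: via the isomorphism $\Theta_X \cong \Omega^{n-1}_X \otimes \omega_X^{-1}$, the obstruction space becomes $H^2(X, \Omega^{n-1}_X(-K_X))$, and the desired control should come from an Akizuki--Nakano-type vanishing for nef-and-big line bundles, for instance by writing $-K_X \sim_{\mathbb{Q}} A + E$ via Kodaira's lemma (with $A$ ample $\mathbb{Q}$-divisor and $E$ effective), taking a log resolution of $E$, invoking the logarithmic Akizuki--Nakano vanishing of Esnault--Viehweg on the resolution, and descending back to $X$.

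The main difficulty is that the classical Akizuki--Nakano theorem genuinely fails for nef-and-big line bundles in general, so one cannot simply claim $H^2(X, \Omega^{n-1}_X(-K_X)) = 0$ outright. Indeed, such a strong vanishing is expected to fail for some weak Fano manifolds, which is precisely why the $T^1$-lifting framework is needed in place of a direct obstruction argument. I therefore expect the proof to combine $T^1$-lifting with an argument that exploits the specific structure of $\omega_X^{-1}$, perhaps routed through the anticanonical contraction $f: X \to Y$ onto the Fano model with canonical Gorenstein singularities, so as to transfer the cohomological problem to a situation in which Kawamata--Viehweg-type vanishings apply cleanly. Verifying that everything lifts compatibly through the $T^1$-lifting loop over the Artinian bases $A_n$ is a secondary but necessary piece of bookkeeping.
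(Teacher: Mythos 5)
Your proposal sets up the right framework ($T^1$-lifting) and correctly diagnoses the central obstacle --- an Akizuki--Nakano-type vanishing $H^2(X,\Omega^{d-1}_X\otimes\omega_X^{-1})=H^2(X,\Theta_X)=0$ genuinely fails for weak Fano manifolds (the paper exhibits $X=\mathbb{P}_{\mathbb{P}^2}(\mathcal{O}\oplus\mathcal{O}(-3))$ with $h^2(X,\Theta_X)=1$) --- but it stops exactly where the real work begins. You never supply a mechanism for verifying the $T^1$-lifting criterion, i.e.\ for proving surjectivity of $T^1(X_n/A_n)\to T^1(X_{n-1}/A_{n-1})$; "an argument that exploits the specific structure of $\omega_X^{-1}$, perhaps routed through the anticanonical contraction" is a placeholder, not a proof, and the route through the anticanonical model runs into the singularities of that model. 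Also, your reformulation of the criterion in terms of injectivity of $H^2(\mathcal{X}_n,\Theta_{\mathcal{X}_n/A_n})\otimes k\to H^2(X,\Theta_X)$ is not the statement of the $T^1$-lifting theorem (which concerns surjectivity of restriction on $T^1$, equivalently freeness plus base change of the relevant $H^1$), so even the setup would need repair.

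The missing idea in the paper is to deform the \emph{pair} rather than $X$ alone. By the base point free theorem one chooses $m\gg 0$ with $|{-}mK_X|$ free and picks a smooth member $D$. One proves $\Def_{(X,D)}$ is smooth by applying $T^1$-lifting to the log tangent sheaf: $T^1((X_n,D_n)/A_n)\simeq H^1(X_n,\Theta_{X_n/A_n}(-\log D_n))$, which by duality is $H^1(X_n,\Omega^{d-1}_{X_n/A_n}(\log D_n)\otimes\omega_{X_n/A_n}^{\otimes m-1})$; this is realized as an eigenpart of $H^1(Z_n,\Omega^{d-1}_{Z_n/A_n}(\log\Delta_n))$ on the degree-$m$ cyclic cover $Z_n\to X_n$ branched along $D_n$, and the required surjectivity follows from freeness and base change of logarithmic Hodge cohomology over $A_n$, proved by Deligne's $E_1$-degeneration argument (Lemma \ref{base}). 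Finally one descends: $H^1(D,\mathcal{N}_{D/X})=0$ by Kawamata--Viehweg, so the forgetful map $\Def_{(X,D)}\to\Def_X$ is smooth and $\Def_X$ inherits smoothness. Without the passage to the pair and the covering/Hodge-theoretic input, your argument cannot close, so as it stands there is a genuine gap.
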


Previously, Ran  proved the unobstructedness for a weak Fano manifold with a smooth anticanonical element (\cite[Corollary 3]{ran}). 
Minagawa's argument in \cite{minagawaosaka}  implies the unobstructedness when $|{-}2K_X|$ contains a smooth element. However 
these assumptions are not satisfied for a general weak Fano manifold as explained in Example \ref{egelep}. 
We prove it for the general case. 

We use the $T^1$-lifting technique developed by Ran, Kawamata, Deligne and Fantechi-Manetti. 
Another approach is dealt with by Buchweitz--Flenner in \cite{BF}. 

The following more general result implies Theorem \ref{wfanothm}. 
\begin{thm}\label{main}
Let $X$ be a smooth projective variety. Assume that $H^1(X, \mathcal{O}_X)=0$ and there exists a positive integer $m$ and a smooth divisor 
$D \in |{-}m K_X|$ such that $H^1(D, \mathcal{N}_{D/X})=0$. 

Then deformations of $X$ are unobstructed. 
\end{thm}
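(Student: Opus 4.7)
The plan is to verify the $T^1$-lifting criterion of Ran and Kawamata. Writing $A_n := k[t]/(t^{n+1})$, it suffices to show that for every $n \geq 1$ and every deformation $\mathcal{X}_n$ of $X$ over $A_n$, the restriction map
\[
H^1(\mathcal{X}_n, \Theta_{\mathcal{X}_n/A_n}) \longrightarrow H^1(\mathcal{X}_{n-1}, \Theta_{\mathcal{X}_{n-1}/A_{n-1}})
\]
is surjective (with $\mathcal{X}_{n-1} := \mathcal{X}_n \otimes_{A_n} A_{n-1}$). My strategy is to accompany $\mathcal{X}_n$ with a compatible deformation $\mathcal{D}_n \subset \mathcal{X}_n$ of $D$, transfer the question to the logarithmic tangent sheaf $\Theta(-\log \mathcal{D}_n)$, and deduce the surjectivity by controlling the resulting obstruction class.

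First I would construct the divisors $\mathcal{D}_n$ inductively. The line bundle $\mathcal{O}_X(D) \cong \omega_X^{-m}$ lifts canonically as $\omega_{\mathcal{X}_n/A_n}^{-m}$. To lift the section cutting out $D$, I apply the exact sequence $0 \to \mathcal{O}_X \to \mathcal{O}_X(D) \to \mathcal{N}_{D/X} \to 0$ together with the two hypotheses $H^1(X, \mathcal{O}_X) = H^1(D, \mathcal{N}_{D/X}) = 0$, which force $H^1(X, \mathcal{O}_X(D)) = 0$. Propagating this vanishing up the Artinian tower produces a smooth relative divisor $\mathcal{D}_n \subset \mathcal{X}_n$ deforming $D$.

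Next, from the sequence $0 \to \Theta_X(-\log D) \to \Theta_X \to \mathcal{N}_{D/X} \to 0$ together with the vanishings $H^1(\mathcal{D}_k, \mathcal{N}_{\mathcal{D}_k/\mathcal{X}_k}) = 0$ at each order $k$ (inductively derived from $H^1(D, \mathcal{N}_{D/X}) = 0$ and the multiplication-by-$t^k$ sequence of normal bundles), the map $H^1(\Theta_{\mathcal{X}_k/A_k}(-\log \mathcal{D}_k)) \twoheadrightarrow H^1(\Theta_{\mathcal{X}_k/A_k})$ is surjective at every order. A diagram chase reduces the desired surjectivity to its analogue at the logarithmic level, which in turn follows from the vanishing of the connecting map into $H^2(X, \Theta_X(-\log D))$ associated to
\[
0 \to \Theta_X(-\log D) \to \Theta_{\mathcal{X}_n/A_n}(-\log \mathcal{D}_n) \to \Theta_{\mathcal{X}_{n-1}/A_{n-1}}(-\log \mathcal{D}_{n-1}) \to 0.
\]

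The main obstacle is precisely controlling this connecting map. A naive Serre-duality reduction transforms the full vanishing of $H^2(X, \Theta_X(-\log D))$ into a condition on $H^{n-2}(X, \Omega^1_X \otimes \omega_X) \cong H^2(X, \Theta_X)^{\vee}$---the very ambient obstruction space, which cannot be killed outright. The proof therefore must show that the \emph{particular} obstruction class vanishes rather than all of $H^2(X, \Theta_X(-\log D))$. I expect this to be accomplished by a Bogomolov--Tian--Todorov-style argument exploiting the $\mathbb{Q}$-log-trivial relation $K_X + \tfrac{1}{m}D \sim_{\mathbb{Q}} 0$ implicit in $D \in |{-}mK_X|$, possibly through a semiregularity construction or through log Hodge theory for the pair $(X, D)$. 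This Hodge-theoretic input is where I foresee the principal technical difficulty.
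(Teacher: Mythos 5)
Your setup is sound and in outline matches the paper's: $T^1$-lifting, accompanying each deformation of $X$ by a deformation of $D$ (your use of $H^1(X,\mathcal{O}_X)=H^1(D,\mathcal{N}_{D/X})=0$ to lift $D$ order by order, and the diagram chase reducing surjectivity for $\Theta$ to surjectivity for $\Theta(-\log)$, is in substance the paper's observation that the forgetful morphism $\Def_{(X,D)}\to\Def_X$ is smooth). But the proposal stops exactly where the real work begins. The surjectivity of $H^1(\Theta_{\mathcal{X}_n/A_n}(-\log\mathcal{D}_n))\to H^1(\Theta_{\mathcal{X}_{n-1}/A_{n-1}}(-\log\mathcal{D}_{n-1}))$ is the whole content of the theorem, and ``a Bogomolov--Tian--Todorov-style argument \ldots possibly through a semiregularity construction or through log Hodge theory'' is a guess at what might work, not an argument. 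Moreover, plain log Hodge theory for the pair $(X,D)$ does not apply directly here: the perfect pairing $\Omega^1_{\mathcal{X}_n/A_n}(\log\mathcal{D}_n)\times\Omega^{d-1}_{\mathcal{X}_n/A_n}(\log\mathcal{D}_n)\to\mathcal{O}(K_{\mathcal{X}_n/A_n}+\mathcal{D}_n)$ identifies $\Theta_{\mathcal{X}_n/A_n}(-\log\mathcal{D}_n)$ with $\Omega^{d-1}_{\mathcal{X}_n/A_n}(\log\mathcal{D}_n)\otimes\omega_{\mathcal{X}_n/A_n}^{\otimes(m-1)}$ (using $H^1(X,\mathcal{O}_X)=0$ to lift the linear equivalence $K_X+D\sim(1-m)K_X$ over $A_n$), and for $m>1$ the twist by $\omega^{\otimes(m-1)}$ takes you outside the log de Rham cohomology of $(X,D)$.

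The missing device is the cyclic covering trick. Let $\pi_n\colon Z_n\to \mathcal{X}_n$ be the $m$-fold cover branched along $\mathcal{D}_n$, with $\Delta_n$ the reduced ramification divisor; then $\pi_n^*\Omega^1_{\mathcal{X}_n/A_n}(\log\mathcal{D}_n)\simeq\Omega^1_{Z_n/A_n}(\log\Delta_n)$, and $(\pi_n)_*\Omega^{d-1}_{Z_n/A_n}(\log\Delta_n)$ contains $\Omega^{d-1}_{\mathcal{X}_n/A_n}(\log\mathcal{D}_n)\otimes\omega_{\mathcal{X}_n/A_n}^{\otimes(m-1)}$ as a direct summand, so the required surjectivity is an eigenpart of the restriction map on $H^1(Z_n,\Omega^{d-1}_{Z_n/A_n}(\log\Delta_n))$. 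That surjectivity is then Deligne's degeneration theorem over an Artinian base (Lemma 2.1 of the paper): identifying the hypercohomology of $\Omega^{\bullet}_{Z_n/A_n}(\log\Delta_n)$ with $H^*(Z\setminus\Delta, A_n)$ shows it is $A_n$-free, and $E_1$-degeneration at the closed fibre then forces each $H^q(\Omega^p(\log))$ to be $A_n$-free and to commute with base change, whence the restriction maps are surjective. Without these two steps --- the untwisting by the cyclic cover and the freeness/degeneration lemma --- you have correctly located the obstruction but not killed it, so the proof as proposed is incomplete.
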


We sketch the proof of Theorem \ref{main}. 
Instead of proving the unobstructedness directly, we first prove the unobstructedness for the pair of a weak Fano manifold $X$ 
and a smooth element $D$ of $|{-}mK_X|$ 
for a sufficiently large integer $m$ in Theorem \ref{main1}. Next we show that the unobstructedness for $(X,D)$ implies the unobstructedness for $X$. 

We also show that the Kuranishi space of a smooth projective surface is smooth if the Kodaira dimension of the surface is negative or $0$ in Theorem \ref{negsurf}. 
It seems to be known to experts but we give a proof for the convenience of the reader.  

\section{Proof of theorem}
Fix an algebraically closed field $k$ of characteristic zero.   
Let $Art_k$ be the category of Artinian local $k$-algebras with residue field $k$   
and $Sets$ the category of sets. 
For a proper variety $X$ over $k$ and an effective Cartier divisor $D$ on $X$, let   
$
\Def_{(X,D)} \colon Art_k \rightarrow Sets
$
be the functor sending $A \in Art_k$ to   
 the set of equivalence classes of proper flat morphisms $f \colon X_A \rightarrow \Spec A$ 
together with effective Cartier divisors $D_A \subset X_A$ and marking isomorphisms $\phi_0 \colon X_A \otimes_A k \rightarrow X$ such that 
$\phi_0(D_A \otimes_A k)=D$.  
 This is the pair version of 
the deformation functor $\Def_X$ defined in \cite{kawamataerr}. 
We see that $\Def_{(X,D)}$ is a {\it deformation functor} in the sense of Fantechi--Manetti (\cite[Introduction]{FM}). 

We need the following lemma. 

\begin{lem}\label{base}
Let $Z$ be a smooth proper variety over $k$ and $\Delta \subset Z$ a smooth divisor. 
Set $A_n:= k[t]/(t^{n+1})$ for a non-negative integer $n$. Let $Z_n \rightarrow \Spec A_n$ and $\Delta_n \subset Z_n$ 
be deformations of $Z$ and $\Delta$. Let $\Omega_{Z_n/A_n}^{\bullet} (\log \Delta_n)$ be the de Rham complex of $Z_n/A_n$ with 
logarithmic poles along $\Delta_n$ (cf.\ \cite[(7.1.1)]{IllusieSMF}). 
 Then we have the following:     
\begin{enumerate}
\item[(i)]  the hypercohomology group $\mathbb{H}^{k}(Z_n, \Omega^{\bullet}_{Z_n/A_n}(\log \Delta_n))$ is a free $A_n$-module for all $k$; 
\item[(ii)]the spectral sequence 
\begin{equation}\label{specseq}
E_1^{p,q} := H^q( Z_n, \Omega^p_{Z_n/A_n}(\log \Delta_n)) \Rightarrow \mathbb{H}^{p+q}(Z_n, \Omega^{\bullet}_{Z_n/A_n}(\log \Delta_n))
\end{equation}
degenerates at $E_1$;  
\item[(iii)] 
the cohomology group $H^q( Z_n, \Omega^p_{Z_n/A_n}(\log \Delta_n))$ is a free $A_n$-module and commutes with base change for any $p$ and $q$.  
\end{enumerate}
\end{lem}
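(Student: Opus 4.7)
The plan is to prove (i), (ii) and (iii) simultaneously by induction on $n$, following the classical strategy for Hodge-to-de Rham degeneration over Artinian bases (Deligne in the smooth proper case, with the log-pair version developed in the deformation-theoretic literature).

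For the base case $n = 0$, freeness over $k$ is automatic, and the degeneration of the log Hodge-to-de Rham spectral sequence for $(Z, \Delta)$ is Deligne's classical theorem. For the inductive step, assume (i)--(iii) hold at level $n-1$. The short exact sequence $0 \to (t^n) \to A_n \to A_{n-1} \to 0$, with $(t^n) \simeq k$, produces upon tensoring with the relative log de Rham complex a short exact sequence of complexes on $Z_n$:
$$
0 \to \Omega^{\bullet}_{Z/k}(\log \Delta) \to \Omega^{\bullet}_{Z_n/A_n}(\log \Delta_n) \to \Omega^{\bullet}_{Z_{n-1}/A_{n-1}}(\log \Delta_{n-1}) \to 0,
$$
and likewise term-by-term for each $\Omega^p$. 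Exactness uses that each $\Omega^p_{Z_n/A_n}(\log \Delta_n)$ is locally free on $Z_n$, hence $A_n$-flat, which in turn rests on the smoothness of $Z_n \to \Spec A_n$ and of the relative divisor $\Delta_n$.

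Passing to (hyper)cohomology yields long exact sequences, and the inductive hypothesis provides the upper bounds
$$
\mathrm{length}_{A_n} \mathbb{H}^k(Z_n, \Omega^{\bullet}_{Z_n/A_n}(\log \Delta_n)) \leq (n+1) h^k, \qquad \mathrm{length}_{A_n} H^q(Z_n, \Omega^p_{Z_n/A_n}(\log \Delta_n)) \leq (n+1) h^{p,q},
$$
where $h^k$ and $h^{p,q}$ denote the absolute (log) Betti and Hodge numbers of $(Z, \Delta)$ over $k$. The spectral sequence \eqref{specseq} always yields $\mathrm{length}_{A_n} \mathbb{H}^k \leq \sum_{p+q = k} \mathrm{length}_{A_n} H^q(Z_n, \Omega^p_{Z_n/A_n}(\log \Delta_n))$, and the absolute degeneration $h^k = \sum_{p+q = k} h^{p,q}$ couples the two families of upper bounds: equality in any one forces equality throughout and produces (ii) as the equality case of the spectral-sequence inequality. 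Freeness of the expected rank is then the equality case in $\mathrm{length}_A M \leq \ell(A) \cdot \dim_k(M \otimes_A k)$ for finite modules over an Artinian local ring.

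The main obstacle is producing matching lower bounds, equivalently showing that every connecting map in the long exact hypercohomology sequence vanishes. For this I use the derived base-change isomorphism
$$
\mathbb{R}\Gamma(Z_n, \Omega^{\bullet}_{Z_n/A_n}(\log \Delta_n)) \otimes^{L}_{A_n} k \simeq \mathbb{R}\Gamma(Z, \Omega^{\bullet}_{Z/k}(\log \Delta)),
$$
valid by termwise flatness of the log de Rham complex, together with the associated hyper-Tor spectral sequence: the inductive hypothesis forces the relevant higher Tor groups of $\mathbb{H}^*(Z_{n-1}, \Omega^\bullet)$ to vanish, which in turn forces $\mathbb{H}^k(Z_n, \Omega^{\bullet}_{Z_n/A_n}(\log \Delta_n))$ to be free over $A_n$ of rank $h^k$. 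This establishes (i); the coupling of the inequality chains then gives (ii); and (iii) follows because each long exact sequence of Hodge groups collapses into a short exact sequence, yielding both freeness of $H^q(Z_n, \Omega^p_{Z_n/A_n}(\log \Delta_n))$ and compatibility with base change.
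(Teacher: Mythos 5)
The counting half of your argument (the chain of inequalities relating $\sum_{p+q=k}\operatorname{length} H^q(Z_n,\Omega^p_{Z_n/A_n}(\log\Delta_n))$, $\operatorname{length}\mathbb{H}^k$, and the Hodge and Betti numbers of the closed fibre, with degeneration and freeness falling out of the equality case) is exactly the mechanism the paper uses for (ii) and (iii), following Deligne. The genuine gap is in your proof of (i). You reduce everything to a lower bound on $\operatorname{length}_{A_n}\mathbb{H}^k(Z_n,\Omega^{\bullet}_{Z_n/A_n}(\log\Delta_n))$ and claim that the derived base-change isomorphism plus the inductive hypothesis (freeness over $A_{n-1}$) forces freeness over $A_n$ via the hyper-Tor spectral sequence. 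This implication is false as a piece of algebra. Take $A_2=k[t]/(t^3)$ and the two-term complex $C=\bigl[A_2\xrightarrow{\;t^2\;}A_2\bigr]$ of free $A_2$-modules: then $C\otimes_{A_2}A_1$ has cohomology free of rank $1$ over $A_1$ in both degrees, $C\otimes_{A_2}k$ has cohomology of dimension $1$ in both degrees, derived base change holds termwise, and yet $H^0(C)\simeq H^1(C)\simeq A_1$ is not free over $A_2$. So knowing freeness one level down and the dimensions of the special fibre cannot, by any formal Tor argument, rule out nonzero connecting maps in your long exact sequence; the lower bound you need is precisely the content of (i) and cannot be bootstrapped.

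The paper supplies the missing non-formal input topologically: after reducing to $k=\mathbb{C}$, it shows $\Omega^{\bullet}_{Z_n/A_n}(\log\Delta_n)$ is quasi-isomorphic to $\iota_*\Omega^{\bullet}_{U_n/A_n}$ for $U=Z\setminus\Delta$, and that $\Omega^{\bullet}_{U_n/A_n}$ is a resolution of the constant sheaf $A_{n,U}$ (the relative Poincar\'e lemma, Deligne's Lemme 5.3). Hence $\mathbb{H}^k(Z_n,\Omega^{\bullet}_{Z_n/A_n}(\log\Delta_n))\simeq H^k(U,A_n)\simeq H^k(U,\mathbb{C})\otimes A_n$, which is free of the expected rank with no induction needed. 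If you want to salvage your write-up, replace the inductive/Tor step for (i) by this comparison with singular cohomology (or some equivalent Gauss--Manin/local-constancy argument); the rest of your proof then goes through and coincides with the paper's.
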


\begin{proof}
We can prove this by the same argument as in \cite[Th\'{e}or\`{e}me 5.5]{deligne}. 
We give a proof for the convenience of the reader.
We can assume that $k = \mathbb{C}$ by the Lefschetz principle. 

(i) 
Set $U:= Z \setminus \Delta$. Let $\iota \colon U \hookrightarrow Z$ be the open immersion. 
We see that the complex $\Omega^{\bullet}_{Z_n/A_n}(\log \Delta_n)$ is
quasi-isomorphic to $\iota_* \Omega^{\bullet}_{U_n/A_n}$ by a standard argument as in \cite[Proposition 4.3]{PS}, 
where $U_n \rightarrow \Spec A_n$ is a deformation of $U$ which is induced by $Z_n \rightarrow \Spec A_n$.   
We have an isomorphism
$\mathbb{H}^{k}(Z_n, \iota_* \Omega^{\bullet}_{U_n/A_n}) \simeq \mathbb{H}^{k}(U_n, \Omega^{\bullet}_{U_n/A_n})$ 
since we have $R^i \iota_* \Omega^j_{U_n/A_n} =0$ for $i >0$ and all $j$. 
Moreover we have $ \mathbb{H}^{p+q}(U_n, \Omega^{\bullet}_{U_n/A_n}) \simeq H^{p+q}(U, A_n)$, 
where the latter is the singular cohomology on $U$ with coefficient $A_n$ since 
$\Omega^{\bullet}_{U_n/A_n}$ is a resolution of the sheaf 
$ A_{n, U}$, where $A_{n, U}$ is a constant sheaf on $U$ associated to $A_n$ (See \cite[Lemme 5.3]{deligne}). 
Hence we obtain (i) since we have 
\[
\mathbb{H}^{p+q}(Z_n, \Omega^{\bullet}_{Z_n/A_n}(\log \Delta_n)) \simeq H^{p+q}(U, A_n) \simeq H^{p+q}(U, \mathbb{C}) \otimes A_n.  
\]
Moreover we obtain the equality 
\[
\dim_{\mathbb{C}} \mathbb{H}^{p+q}(Z_n, \Omega^{\bullet}_{Z_n/A_n}(\log \Delta_n))  = \dim_{\mathbb{C}} (A_n) \cdot \dim_{\mathbb{C}} \mathbb{H}^{p+q}(Z, \Omega^{\bullet}_Z(\log \Delta)). 
\]

(ii) By the argument as in \cite[(5.5.5)]{deligne}, we see that 
\begin{equation}\label{1stineq}
\dim_{\mathbb{C}} H^q(Z, \Omega^p_{Z_n/A_n} (\log \Delta_n)) \le \dim_{\mathbb{C}}(A_n) \cdot \dim_{\mathbb{C}} H^q(Z, \Omega^p_Z (\log \Delta))
\end{equation}
and equality holds if and only if $H^q(Z, \Omega^p_{Z_n/A_n}(\log \Delta_n))$ is a free $A_n$-module. 
By the spectral sequence (\ref{specseq}), we have 
\begin{equation}\label{2ndineq}
 \sum_{p+q=k} \dim_{\mathbb{C}} H^q(Z_n, \Omega^p_{Z_n/A_n} (\log \Delta_n)) \ge \dim_{\mathbb{C}} \mathbb{H}^{k} (Z_n, \Omega^{\bullet}_{Z_n/A_n}(\log \Delta_n)).  
 \end{equation}      
 By the two inequalities (\ref{1stineq}), (\ref{2ndineq}) and (i), we obtain 
 \begin{equation}\label{3rdineq}
 \dim_{\mathbb{C}} (A_n) \cdot \sum_{p+q=k} \dim_{\mathbb{C}} H^q(Z, \Omega^p_Z(\log \Delta)) \ge 
\dim_{\mathbb{C}} (A_n) \cdot \dim_{\mathbb{C}} \mathbb{H}^{k} (Z, \Omega^{\bullet}_Z(\log \Delta)).  
 \end{equation}
We have equality in the inequality (\ref{3rdineq}) since  
the spectral sequence (\ref{specseq}) degenerates at $E_1$ when $n=0$ by \cite[Corollaire (3.2.13)(ii)]{Delignehodge2}. 
Hence we have equality in (\ref{2ndineq}) and obtain (ii).  

(iii) This follows from (i) and (ii). 
\end{proof}

To prove Theorem \ref{main}, we prove the following theorem on unobstructedness of deformations of a pair. 

\begin{thm}\label{main1}
Let $X$ be a smooth proper variety such that $H^1(X, \mathcal{O}_X) =0$. 
Assume that there exists a smooth divisor $D \in |{-}mK_X|$ for some positive integer $m$.   
Then 
deformations of $(X,D)$ are unobstructed, that is, $\Def_{(X,D)}$ is a smooth functor. 
\end{thm}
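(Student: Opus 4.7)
I will apply the $T^1$-lifting criterion of Fantechi--Manetti \cite{FM}: the functor $\Def_{(X,D)}$ is smooth if and only if, for every $n\ge 0$ and every deformation $(X_n,D_n)/\Spec A_n$, the first-order deformation module of the pair is free over $A_n$. Standard deformation theory identifies this module with $H^1(X_n,\Theta_{X_n/A_n}(-\log D_n))$, so the task reduces to proving freeness of this cohomology group. The strategy is to realize it as an isotypic summand of a cohomology to which Lemma~\ref{base} applies, by passing to a cyclic cover of degree $m$ branched along $D_n$.

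Using $\mathcal{O}_X(D)\cong \omega_X^{-m}$ together with $H^1(X,\mathcal{O}_X)=0$, the injectivity of $\Pic(X_n)\to \Pic(X)$ yields a canonical isomorphism $\mathcal{O}_{X_n}(D_n)\cong \omega_{X_n/A_n}^{-m}$ on every infinitesimal neighbourhood. Setting $L_n := \omega_{X_n/A_n}^{-1}$ and using the section of $L_n^{\otimes m}$ cutting out $D_n$, the standard cyclic cover construction produces a smooth proper morphism $\pi_n\colon Y_n\to X_n$ of degree $m$ with a smooth ramification divisor $R_n\subset Y_n$ and a faithful $\mathbb{Z}/m$-action whose quotient is $(X_n,D_n)$; moreover $(Y_n,R_n)\to \Spec A_n$ is a deformation of $(Y,R)\to \Spec k$.

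A local computation ($\pi_n^{*}(dx_1/x_1)=m\,dy/y$ where $y^m=x_1$ defines $R_n$) shows that for each $p$ the natural pullback map $\pi_n^{*}\Omega^{p}_{X_n/A_n}(\log D_n)\to \Omega^{p}_{Y_n/A_n}(\log R_n)$ is an isomorphism, as $m$ is invertible in $A_n$. Pushing forward and applying the projection formula together with $\pi_{n*}\mathcal{O}_{Y_n}=\bigoplus_{i=0}^{m-1}L_n^{-i}$ gives
\[
\pi_{n*}\Omega^{p}_{Y_n/A_n}(\log R_n)\;\cong\;\bigoplus_{i=0}^{m-1}\Omega^{p}_{X_n/A_n}(\log D_n)\otimes\omega_{X_n/A_n}^{\,i},
\]
where the $i$-th summand is the $i$-th $\mathbb{Z}/m$-eigenspace. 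Logarithmic duality on the smooth pair $(X_n,D_n)$ combined with $\omega_{X_n/A_n}(D_n)\cong \omega_{X_n/A_n}^{\,1-m}$ identifies $\Theta_{X_n/A_n}(-\log D_n)\cong \Omega^{d-1}_{X_n/A_n}(\log D_n)\otimes \omega_{X_n/A_n}^{\,m-1}$ (with $d=\dim X$), i.e.\ the $i=m-1$ summand above.

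Now Lemma~\ref{base} applied to $(Y_n,R_n)$ yields that $H^{1}(Y_n,\Omega^{d-1}_{Y_n/A_n}(\log R_n))$ is a free $A_n$-module. Since $m$ is invertible in $A_n$, the averaging idempotents of the $\mathbb{Z}/m$-action are $A_n$-linear and split this module into isotypic components; each component is therefore a direct summand of a free $A_n$-module, hence projective, hence free over the local Artinian ring $A_n$. The $(m-1)$-st isotypic component equals $H^{1}(X_n,\Theta_{X_n/A_n}(-\log D_n))$, so the latter is free, completing the $T^1$-lifting verification. The main technical obstacle I expect is the cyclic-cover step: one has to extract an $m$-th root of $\mathcal{O}_{X_n}(D_n)$ in a way that is compatible across all $A_n$, and this is exactly where the hypothesis $H^1(X,\mathcal{O}_X)=0$ enters; the smoothness of $D$ is used to ensure $Y_n$ and $R_n$ are smooth so that Lemma~\ref{base} is applicable.
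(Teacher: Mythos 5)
Your proposal is correct and follows essentially the same route as the paper: the $T^1$-lifting criterion of Fantechi--Manetti, the identification $\Theta_{X_n/A_n}(-\log D_n)\cong\Omega^{d-1}_{X_n/A_n}(\log D_n)\otimes\omega_{X_n/A_n}^{\,m-1}$ via log duality and $H^1(X,\mathcal{O}_X)=0$, the degree-$m$ cyclic cover branched along $D_n$, the eigenspace decomposition of the pushforward, and Lemma~\ref{base} applied to the cover. The only cosmetic difference is that you phrase the criterion via freeness of the $T^1$-module rather than surjectivity of the restriction maps $T^1(A_n)\to T^1(A_{n-1})$; both follow from Lemma~\ref{base}(iii), which supplies freeness together with base change.
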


\begin{proof}
Set $A_n:= k[t]/(t^{n+1})$ and $B_n:= k[x,y]/ (x^{n+1},y^2) \simeq A_n \otimes_k A_1$.
For $[ (X_n,D_n), \phi_0] \in \Def_{(X,D)}(A_n)$, let $T^1((X_n,D_n)/A_n)$  be the set of isomorphism classes of pairs 
$((Y_n,E_n), \psi_n) $ consisting of deformations $(Y_n, E_n)$ of $(X_n, D_n)$ over $B_n$ and marking isomorphisms \\ 
$\psi_n \colon Y_n \otimes_{B_n} A_n 
\rightarrow X_n$ such that $\psi_n(E_n \otimes_{B_n} A_n) = D_n$, where we use a ring homomorphism 
$B_n \rightarrow A_n$ given by $x \mapsto t$ and $y \mapsto 0$. 
Then we see the following. 
\begin{claim}We have 
\begin{equation}
T^1((X_n,D_n)/A_n) \simeq H^1(X_n, \Theta_{X_n/A_n}(-\log D_n) ),  
\end{equation} 
where $\Theta_{X_n/A_n}(-\log D_n)$ is the dual of $\Omega^1_{X_n/A_n}(\log D_n)$. 
\end{claim}

\begin{proof}
We can prove this by a standard argument (cf. \cite[Proposition 3.4.17]{Sernesi}) using $B_n = A_n \otimes_k A_1$.  
\end{proof}

Hence, by \cite[Theorem A]{FM}, it is enough to show that the natural homomorphism 
\[
\gamma_n \colon H^1(X_n, \Theta_{X_n/A_n}(-\log D_n)) \rightarrow H^1(X_{n-1}, \Theta_{X_{n-1}/A_{n-1}}(-\log D_{n-1}))
\]
is surjective for the above $X_n, D_n$ and for $X_{n-1}:= X_n \otimes_{A_n} A_{n-1}, D_{n-1}:= D_n \otimes_{A_n} A_{n-1}$. 

Note that we have a perfect pairing 
\[
\Omega^1_{X_n/A_n}(\log D_n) \times \Omega^{d-1}_{X_n/A_n}(\log D_n) \rightarrow \mathcal{O}_{X_n}(K_{X_n/A_n} +D_n) \simeq \omega_{X_n/A_n}^{\otimes 1-m},   
\]
where we set $d:= \dim X$. 
We have $ \mathcal{O}_{X_n}(K_{X_n/A_n} +D_n) \simeq \omega_{X_n/A_n}^{\otimes 1-m}$ since we have $H^1(X, \mathcal{O}_X) =0$ 
(See \cite[Theorem 6.4(b)]{Hdef}, for example.). Thus we see that 
\[
H^1(X_n, \Theta_{X_n/A_n}(-\log D_n)) \simeq H^1(X_n, \Omega^{d-1}_{X_n/A_n}(\log D_n) \otimes \omega_{X_n/A_n}^{\otimes m-1}).  
\]

Let 
\[
\pi_n \colon Z_n:= \Spec \bigoplus_{i=0}^{m-1} \mathcal{O}_{X_n}(i K_{X_n/A_n}) \rightarrow X_n 
\]
be the ramified covering defined by a section $\sigma_{D_n} \in H^0(X_n, -m K_{X_n/A_n})$ which corresponds to $D_n$. We have an isomorphism 
\[
\pi_n^* \Omega^1_{X_n/A_n}(\log D_n) \simeq \Omega^1_{Z_n/A_n}(\log \Delta_n)
\]
for some divisor $\Delta_n \in |{-}\pi_n^* K_{X_n/A_n}|$. Hence we see that 
\[
(\pi_n)_* \Omega^{d-1}_{Z_n/A_n} (\log \Delta_n) \simeq \bigoplus_{i=0}^{m-1} \Omega^{d-1}_{X_n/A_n}(\log D_n) (i K_{X_n/A_n}) 
\]
and $\Omega^{d-1}_{X_n/A_n}(\log D_n) \otimes \omega_{X_n/A_n}^{\otimes m-1}$ is one of the direct summands. 

Hence it is enough to show that the natural restriction homomorphism 
\[
r_n \colon H^1(Z_n, \Omega^{d-1}_{Z_n/A_n}(\log \Delta_n)) \rightarrow H^1(Z_{n-1}, \Omega^{d-1}_{Z_{n-1}/A_{n-1}}(\log \Delta_{n-1})) 
\]
is surjective, where we set $Z_{n-1}:= Z_n \otimes_{A_n} A_{n-1}$ and $\Delta_{n-1}:= \Delta_n \otimes_{A_n} A_{n-1}$, since $\gamma_n$ is an eigenpart of $r_n$. 
By Lemma \ref{base}(iii), we see the required surjectivity. This completes the proof of Theorem \ref{main1}. 
\end{proof}

\begin{rem}
Iacono \cite{iacono} proved Theorem \ref{main1} when $m=1$ without the assumption $H^1(X, \mathcal{O}_X)=0$ in \cite[Corollary 4.5]{iacono} 
as a consequence of the analysis of DGLA. 
\end{rem}

\begin{rem}
We can remove the assumption $H^1(X,\mathcal{O}_X)=0$ when $m=1$ by a similar argument as in \cite[Corollary 2]{ran}. 
In that case, we see that $\mathcal{O}_{X_n}(K_{X_n/A_n} + D_n) \simeq \mathcal{O}_{X_n}$ since we have $H^0(X_n, K_{X_n/A_n}+D_n) \simeq A_n$ 
by Claim \ref{base}, with $X_n, D_n$ as in the proof of Theorem \ref{main1}. 

We do not know whether we can remove the assumption $H^1(X, \mathcal{O}_X)=0$ in Theorem \ref{main1} when $m$ is arbitrary. 
\end{rem}

\vspace{5mm}

Theorem \ref{main1} implies Theorem \ref{main} as follows. 

\begin{proof}[Proof of Theorem \ref{main}]
Since $H^1(D, \mathcal{N}_{D/X})=0$, we see that the forgetful morphism 
\[
\Def_{(X,D)} \rightarrow \Def_X
\]
 between functors is smooth. 
Since $\Def_{(X,D)}$ is smooth by Theorem \ref{main1}, we see that $\Def_X$ is also smooth. 
\end{proof}

\vspace{5mm}

Theorem \ref{main} implies Theorem \ref{wfanothm} as follows. 

\begin{proof}[Proof of Theorem \ref{wfanothm}]
Let $X$ be a weak Fano manifold of dimension $d$. 
By the base point free theorem, we can take a sufficiently large integer $m$  such that $-mK_X$ is base point free and 
contains a smooth element $D \in |{-}mK_X|$.  
We have $H^1(D, \mathcal{N}_{D/X})=0$  since there is an exact sequence 
\[
H^1(X, \mathcal{O}_X (D)) \rightarrow H^1(D, \mathcal{N}_{D/X}) \rightarrow H^2(X,\mathcal{O}_X) 
\] 
and both outer terms are zero by the Kawamata--Viehweg vanishing theorem. 
Hence Theorem \ref{main} implies Theorem \ref{wfanothm}. 
\end{proof}

\begin{rem}
We can prove the following theorem by the same argument as Theorem \ref{wfanothm}. 

\begin{thm}\label{moishezon}
Let $X$ be a complex manifold whose anticanonical bundle is nef and big. 
Then deformations of $X$ are unobstructed. 
\end{thm}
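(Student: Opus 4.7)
The plan is to mirror the proof of Theorem \ref{wfanothm}, taking care that each ingredient survives the passage from the smooth projective setting to the complex-analytic Moishezon setting; note that the nefness and bigness of $-K_X$ already force $X$ to be Moishezon. The first task is to produce a smooth divisor $D \in |{-}mK_X|$ for some $m \ge 1$ for which the hypotheses of Theorem \ref{main} hold. I would invoke the Kawamata--Viehweg vanishing theorem, which is valid on Moishezon manifolds (via pullback to a projective bimeromorphic modification and Grothendieck--Leray), to obtain $H^i(X, \mathcal{O}_X) = 0$ for $i \ge 1$ together with the vanishings $H^i(X, \mathcal{O}_X(-jK_X)) = 0$ for $i, j \ge 1$. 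The basepoint-free theorem, which extends to the Moishezon category, then produces an $m \gg 0$ for which $|{-}mK_X|$ is basepoint-free, and Bertini supplies a smooth member $D$. The standard exact sequence for the normal bundle then forces $H^1(D, \mathcal{N}_{D/X}) = 0$ exactly as in the proof of Theorem \ref{wfanothm}.

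I would then transplant the proofs of Theorems \ref{main1} and \ref{main} to the Moishezon category. The deformation functors $\Def_X$ and $\Def_{(X,D)}$ are defined for any compact complex manifold and divisor, the Fantechi--Manetti $T^1$-lifting criterion is purely categorical, and the cyclic-cover construction $\pi_n \colon Z_n \to X_n$ together with the identification of $\pi_n^* \Omega^1_{X_n/A_n}(\log D_n)$ with $\Omega^1_{Z_n/A_n}(\log \Delta_n)$ is a local calculation that goes through without change. So the only place that requires genuine additional input is Lemma \ref{base}, whose proof rests on the Deligne $E_1$-degeneration of the logarithmic Hodge--de Rham spectral sequence on a smooth projective pair.

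The main obstacle is therefore establishing the $E_1$-degeneration when $Z$ is a smooth compact Moishezon manifold and $\Delta \subset Z$ a smooth divisor. I would reduce to the projective case using Hironaka on Moishezon manifolds: pick a projective bimeromorphic modification $\mu \colon \tilde Z \to Z$ that is an isomorphism over $U := Z \setminus \Delta$, set $\tilde \Delta := \mu^{-1}(\Delta)_{\mathrm{red}}$, arranged after further blowups to be a simple normal crossings divisor, and observe that $\tilde Z \setminus \tilde \Delta \simeq U$. Both $\mathbb{H}^*(Z, \Omega^{\bullet}_Z(\log \Delta))$ and $\mathbb{H}^*(\tilde Z, \Omega^{\bullet}_{\tilde Z}(\log \tilde \Delta))$ compute $H^*(U, \mathbb{C})$, so the dimension-counting argument of Lemma \ref{base}(ii) transfers from $\tilde Z$ to $Z$ once one verifies the injectivity $\mu^* \colon H^q(Z, \Omega^p_Z(\log \Delta)) \hookrightarrow H^q(\tilde Z, \Omega^p_{\tilde Z}(\log \tilde \Delta))$, which follows from the Leray spectral sequence since $\mu_* \Omega^p_{\tilde Z}(\log \tilde \Delta) \supseteq \Omega^p_Z(\log \Delta)$. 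The same reduction works in the relative setting over $A_n$ by applying it to a suitable resolution of the family $Z_n$. Once Lemma \ref{base} is secured in this generality, the remainder of the proofs of Theorems \ref{main1} and \ref{main} applies verbatim, yielding the unobstructedness of $\Def_X$.
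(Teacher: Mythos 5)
Your proposal follows the same route as the paper, which proves Theorem \ref{moishezon} simply by observing that $X$ is Moishezon, asserting that Lemma \ref{base} and the base-point free theorem remain valid in this setting, and repeating the proof of Theorem \ref{wfanothm}; you in fact supply substantially more detail than the paper does. The one place where your added detail does not hold up as stated is the claimed injectivity of $\mu^* \colon H^q(Z, \Omega^p_Z(\log \Delta)) \to H^q(\tilde Z, \Omega^p_{\tilde Z}(\log \tilde\Delta))$ ``from the Leray spectral sequence'': the edge homomorphism $H^q(Z, \mu_*\mathcal{F}) \to H^q(\tilde Z, \mathcal{F})$ is injective only for $q=1$ in general (for $q \ge 2$ its domain is first cut down by the images of the differentials $d_r \colon E_r^{q-r,\,r-1} \to E_r^{q,0}$ before embedding into $H^q$), and the map on $H^q$ induced by the sheaf inclusion $\Omega^p_Z(\log \Delta) \hookrightarrow \mu_*\Omega^p_{\tilde Z}(\log \tilde\Delta)$ need not be injective either. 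Since the dimension count in Lemma \ref{base}(ii) sums over all $(p,q)$ with $p+q=k$, you cannot restrict attention to $q=1$, so this step is a genuine gap as written. A cleaner way to secure the required $E_1$-degeneration for the pair $(Z,\Delta)$ with $Z$ Moishezon is to invoke Deligne's mixed Hodge theory, which applies to compact manifolds in Fujiki's class $\mathcal{C}$ --- and $Z$, being a finite cover of the Moishezon manifold $X$, lies in that class; this is what the paper implicitly appeals to. Note also that only the case $n=0$ of the degeneration is needed: the relative statement over $A_n$ then follows from the freeness of the hypercohomology in Lemma \ref{base}(i) together with the dimension-counting argument, exactly as in the paper, so there is no need to resolve the infinitesimal family $Z_n$ over the Artinian base.
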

Actually we see that such a complex manifold is Moishezon since there is a big divisor on $X$. 
Hence we can show Lemma \ref{base} and the base-point free theorem in this setting. 
Using these, we can show Theorem \ref{moishezon} in the same way as Theorem \ref{wfanothm}.  
\end{rem}

\begin{eg}
We give an example of a weak Fano manifold such that $H^2(X, \Theta_X) \neq 0$, where $\Theta_X$ is the tangent sheaf. 

Let $f \colon X \rightarrow \mathbb{P}(1,1,1,3)$ be the blow-up of the singular point $p$ of the weighted projective space. 
We can check that $X \simeq \mathbb{P}_{\mathbb{P}^2}(\mathcal{O}_{\mathbb{P}^2} \oplus \mathcal{O}_{\mathbb{P}^2}(-3))$
 and $f$ is the anticanonical morphism of $X$. 
Hence $-K_X = f^* (- K_{\mathbb{P}(1,1,1,3)})$ and this is nef and big. 
Set $\mathcal{E}:= \mathcal{O}_{\mathbb{P}^2} \oplus \mathcal{O}_{\mathbb{P}^2}(-3)$. 
By a direct calculation using the relative Euler sequence for $\mathbb{P}_{\mathbb{P}^2}(\mathcal{E}) \rightarrow \mathbb{P}^2$, 
we see that 
\[
h^2(X, \Theta_X) = h^2(X, \Theta_{X/ \mathbb{P}^2}) = h^2(\mathbb{P}^2, \mathcal{E} \otimes \mathcal{E}^*) = 1. 
\]
Hence $H^2(X, \Theta_X) \neq 0$. 

Thus we need a technique such as $T^1$-lifting for the proof of Theorem \ref{main}. 
\end{eg}

\begin{eg}\label{egelep}
We give an example of a Fano manifold such that neither of the linear systems $|{-}K_X|$ and $|{-}2K_X|$ contain smooth elements. 
Our example is a modification of an example in \cite[Example 3.2 (3)]{effnv}.

Let $X:= X_{5d} \subset \mathbb{P}(1,\ldots,1,5,d) = \mathbb{P}(1^n,5,d)$ be a weighted hypersurface of degree $5d$ and dimension $n$. 
Assume that $d \not\equiv 0  \mod 5$ and that $5+n-4d =2$. (For example, $d=6, n=21$.) 
The latter condition implies that $-K_X = \mathcal{O}_X(2)$. 
We see that the base locus of $|{-}K_X|$ and $|{-}2K_X|$ consists of a point $p:= H_1 \cap \ldots \cap H_n \cap X_{5d}$, where 
$H_1, \ldots, H_n$ are degree $1$ hyperplanes of the first $n$ coordinates of $\mathbb{P}(1^n,5,d)$. 
We see that every element of $|{-}K_X|$ has multiplicity $2$ at the base point $p$ and hence is singular. 
We also see that  every element of $|{-}2K_X|$ has multiplicity $4$ at the base point $p$ and hence is singular. 
\end{eg}

\begin{eg}
We give an example of a smooth projective variety such that $\Def_X$ is not smooth and $-K_X$ is big. 

Let $C \subset \mathbb{P}^3$ be a smooth curve with an obstructed embedded deformation which lies in a cubic surface as in \cite[Theorem 13.1]{Hdef}. 
Let $\mu \colon X \rightarrow \mathbb{P}^3$ be the blow-up of $\mathbb{P}^3$ along $C$. Then $X$ has an obstructed deformation. 
See \cite[Example 13.1.1]{Hdef}. 
Note that 
${-}K_X = \mu^* \mathcal{O}_{\mathbb{P}^3}(4)- E$ where $E:= \mu^{-1}(C)$ and $C$ is contained in a cubic surface $S \subset \mathbb{P}^3$. 
Let $\tilde{S} \subset X$ be the strict transform of $S$. 
Then we see that $-K_X$ is big since $\tilde{S} + |\mu^* \mathcal{O}_{\mathbb{P}^3}(1)| \subset |{-}K_X| $.  
\end{eg}

\begin{eg}
We give an example of $X$ and $D \in |{-}K_X|$ such that $\Def_{(X,D)}$ is smooth but $\Def_X$ is not smooth. 

Let $C \subset \mathbb{P}^3$ be a smooth curve in a quartic surface $S$ such that the Hilbert scheme of curves in $\mathbb{P}^3$ is singular 
at the point corresponding to $C$ (cf. \cite[Exercise 13.2]{Hdef}). 
Let $X \rightarrow \mathbb{P}^3$ be the blow-up of $\mathbb{P}^3$ along $C$. Then $X$ has an obstructed deformation. 
However the strict transform $D:= \tilde{S} \in |{-}K_X|$ of $S$ is smooth and $H^1(X, \mathcal{O}_X)=0$. Hence $\Def_{(X,D)}$ is smooth by Theorem \ref{main1}. 
\end{eg}

\begin{eg}
We give an example of $X$ with an obstructed deformation such that $-K_X$ is nef. 

Set $X := T^m \times \mathbb{P}^1$ where $T^m$ is a complex torus of dimension $m \ge 2$. Then $X$ has an obstructed deformation (\cite[p.436--441]{kodaira}). 
Note that $-K_X$ is nef. It is actually semiample.  
\end{eg}

It is natural to ask the following question:  

\begin{prob}
Let $X$ be a smooth projective variety such that $-K_X$ is nef and $H^1(X,\mathcal{O}_X)=0$. Is the Kuranishi space of $X$ smooth? 
\end{prob}

\section{The surface case}

The following lemma states that smoothness of the Kuranishi space is preserved under the blow-up at a point. 

\begin{lem}\label{minred}
Let $S$ be a smooth projective variety and $\nu \colon T \rightarrow S$ the blow-up at a point $p \in S$. 
Then the functor $\Def_S$ is smooth if and only if the functor $\Def_{T}$ is smooth. 
\end{lem}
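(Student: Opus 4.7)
I propose to mediate between $\Def_S$ and $\Def_T$ through two auxiliary pair deformation functors, modelled on the $\Def_{(X,D)}$ of Section~2. Let $n := \dim S$ and $E := \nu^{-1}(p) \simeq \mathbb{P}^{n-1}$ be the exceptional divisor. Introduce $\Def_{(S,p)}$, the functor of deformations of $S$ together with a section lifting $p$, and $\Def_{(T,E)}$, the functor of deformations of $T$ together with a Cartier divisor lifting $E$. The plan is to build natural transformations
\[
\Def_T \longleftarrow \Def_{(T,E)} \longrightarrow \Def_{(S,p)} \longrightarrow \Def_S
\]
in which the two outer arrows are smooth and the middle arrow is an equivalence of functors. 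Since smoothness of a deformation functor is preserved in both directions by smooth morphisms, this chain immediately yields the equivalence of smoothness of $\Def_S$ and $\Def_T$.

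The two outer arrows are standard. The forgetful morphism $\Def_{(S,p)} \to \Def_S$ is smooth because sections of smooth morphisms extend across nilpotent thickenings without obstruction: given $(S_A, p_A)$ and a lift $S_{A'}$ of $S_A$ to a small extension $A' \twoheadrightarrow A$, smoothness of $S_{A'} \to \Spec A'$ along the section allows $p_A$ to be lifted to $p_{A'}$. The forgetful morphism $\Def_{(T,E)} \to \Def_T$ is an equivalence of functors because $\mathcal{N}_{E/T} \simeq \mathcal{O}_{\mathbb{P}^{n-1}}(-1)$ has vanishing cohomology in every degree: the obstruction to lifting $E$ lies in $H^1(E, \mathcal{N}_{E/T}) = 0$ so a lift $E_A \subset T_A$ exists, and it is unique because $H^0(E, \mathcal{N}_{E/T}) = 0$.

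The main content is the middle equivalence $\Def_{(S,p)} \simeq \Def_{(T,E)}$, given by the blow-up. In the forward direction, the assignment $(S_A, p_A) \mapsto (\mathrm{Bl}_{p_A} S_A, E_A)$ is well-defined: the section $p_A$ is a regular embedding (being the image of a section of the smooth morphism $S_A \to \Spec A$), so its blow-up is $A$-flat with exceptional divisor $E_A$ lifting $E$. For the inverse, starting from $(T_A, E_A)$, one first uses rigidity of $\mathbb{P}^{n-1}$ (from $H^i(\Theta_{\mathbb{P}^{n-1}}) = 0$ for $i \geq 1$) and the fact that $\Pic(\mathbb{P}^{n-1}_A) \simeq \mathbb{Z}$ to identify $E_A \to \Spec A$ with the trivial $\mathbb{P}^{n-1}$-bundle carrying the relative $\mathcal{O}(-1)$ as normal bundle. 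One then produces the algebraic contraction $\nu_A \colon T_A \to S_A$, for instance by gluing $T_A \setminus E_A$ to a local model for the blow-up of a smooth section in a smooth $A$-scheme, or by algebraizing the formal Nakano--Fujiki-type contraction over $\Spec A$.

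I expect the main obstacle to be the contraction construction in the previous paragraph: while its formal or analytic counterpart is classical, realizing $S_A$ as a bona fide $A$-scheme with $T_A \simeq \mathrm{Bl}_{p_A} S_A$ and functorial behaviour in $A$ requires some care. Once this is in place, combining the three steps transports smoothness of the Kuranishi functor between $S$ and $T$, as desired.
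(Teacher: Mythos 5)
Your overall architecture is exactly that of the paper: mediate through $\Def_{(T,E)}$ and $\Def_{(S,p)}$, show the middle arrow is an equivalence and the outer forgetful arrows are smooth, using $H^1(E,\mathcal{N}_{E/T}) \simeq H^1(\mathbb{P}^{n-1},\mathcal{O}_{\mathbb{P}^{n-1}}(-1))=0$ and $H^1(\mathcal{N}_{p/S})=0$. Those parts are fine.

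The genuine gap is the one you yourself flag: the inverse direction $\Def_{(T,E)} \to \Def_{(S,p)}$, i.e.\ contracting $E_A$ inside $T_A$. You offer two routes (gluing to a local model, or algebraizing a formal Nakano--Fujiki contraction) but carry out neither, and neither is routine as stated. The gluing route needs more than rigidity of $E \simeq \mathbb{P}^{n-1}$ and $\Pic(\mathbb{P}^{n-1}_A)\simeq\mathbb{Z}$: you would have to identify an actual (or at least formal) \emph{neighbourhood} of $E_A$ in $T_A$ with the standard model, which is a separate deformation problem for the germ of $E$ in $T$, plus an algebraization step. Since this is the entire content of the lemma's hard direction, leaving it as an acknowledged obstacle means the proof is not complete. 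The paper sidesteps all of this with a short sheaf-theoretic device: given a deformation $(\mathbf{T},\mathbf{E})$ over $A$, define the contracted deformation directly by $\mathcal{O}_{\mathbf{S}} := \nu_*\mathcal{O}_{\mathbf{T}}$ and $\mathcal{I}_{\mathbf{p}} := \nu_*\mathcal{O}_{\mathbf{T}}(-\mathbf{E})$; these are flat over $A$ by Wahl's criterion because $R^1\nu_*\mathcal{O}_T = 0$ and $R^1\nu_*\mathcal{O}_T(-E) = 0$, and one checks this pushforward is inverse to blowing up. If you adopt that construction for the middle equivalence, the rest of your argument goes through as written.
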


\begin{proof}
Let $\Def_{(S,p)}$ be the functor of deformations of a closed immersion $\{p \} \subset S$ and  
$\Def_{(T,E)}$ the functor as in Section 2, where $E:= \nu^{-1}(p)$.
We can define a natural transformation  
\[
\nu_* \colon \Def_{(T,E)} \rightarrow \Def_{(S,p)}  
\]
as follows:  
given $A \in Art_k$ and a deformation $(\mathbf{T}, \mathbf{E})$ of $(T,E)$ over $A$,  
we see that $\nu_* \mathcal{O}_{\mathbf{T}}$ is a sheaf of flat $A$-algebras by \cite[Corollary 0.4.4]{wahl} since we have $R^1\nu_* \mathcal{O}_T =0$.   
We also see that $\nu_* \mathcal{O}_{\mathbf{T}}(- \mathbf{E})$ is a sheaf of flat $A$-modules 
by \cite[Corollary 0.4.4]{wahl} since we have $R^1\nu_* \mathcal{O}_T (-E) =0$ by a direct calculation. 
Hence we can define a deformation $(\mathbf{S}, \mathbf{p})$ of $(S,p)$ over $A$ by sheaves $\mathcal{O}_{\mathbf{S}}:= \nu_* \mathcal{O}_{\mathbf{T}}, 
\mathcal{I}_{\mathbf{p}}:=  \nu_* \mathcal{O}_{\mathbf{T}}(- \mathbf{E})$ and obtain a natural transformation $\nu_*$. 

We can also define a natural transformation 
\[
\nu^* \colon \Def_{(S,p)} \rightarrow \Def_{(T,E)} 
\]
as follows: given a deformation $(\mathbf{S}, \mathbf{p})$ of $(S,p)$ over $A \in Art_k$, we define a deformation $\mathbf{T}$ of $T$ 
as the blow-up of $\mathbf{S}$ along $\mathbf{p}$. 
We can also define a deformation $\mathbf{E}$ of $E$ by the inverse image ideal sheaf $\nu^{-1} \mathcal{I}_{\mathbf{p}} \cdot \mathcal{O}_{\mathbf{T}}$, 
where $\mathcal{I}_{\mathbf{p}}$ is the ideal sheaf of $\mathbf{p} \subset \mathbf{S}$. 

We see that $\nu_*$ and $\nu^*$ are inverse to each other. Hence we have $\Def_{(T,E)} \simeq \Def_{(S,p)}$ as functors. 

We have forgetful morphisms of functors $F_T \colon \Def_{(T,E)} \rightarrow \Def_T$ and $F_S \colon \Def_{(S,p)} \rightarrow \Def_S$. 
We see that $F_T$ and $F_S$ are smooth since we have $H^1(E, \mathcal{N}_{E/T}) \simeq H^1(\mathbb{P}^{d-1}, \mathcal{O}_{\mathbb{P}^{d-1}}(-1)) =0$ and $H^1(\mathcal{N}_{p/S})=0$, where we set $d:= \dim S$. 

Thus we have a diagram 
\[
\xymatrix{
\Def_{(T,E)} \ar[r]^{\sim} \ar[d]^{F_T} & \Def_{(S,p)} \ar[d]^{F_S} \\
\Def_{T} & \Def_S,  
}
\]
where $F_T$ and $F_S$ are smooth. 
Hence we see the required equivalence. 
\end{proof}

By this lemma, we see that a smooth projective surface has unobstructed deformations if and only if 
its relatively minimal model has unobstructed deformations. 

Using Lemma \ref{minred}, we can prove the following:  
\begin{thm}\label{negsurf}
Let $X$ be a smooth projective surface with non-positive Kodaira dimension. 
Then the deformations of $X$ are unobstructed. 
\end{thm}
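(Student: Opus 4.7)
The plan is to reduce to minimal surfaces via Lemma~\ref{minred} and then run through the Enriques--Kodaira classification case by case. Iterating Lemma~\ref{minred} shows that $\Def_X$ is smooth if and only if $\Def_{X_{\min}}$ is smooth for any relatively minimal model $X_{\min}$ of $X$, so I may assume $X$ is minimal. The classification then presents six cases: $X$ is $\mathbb{P}^2$, a geometrically ruled surface $\pi\colon X \to C$, a K3 surface, an abelian surface, an Enriques surface, or a bielliptic surface.

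For the Kodaira dimension $-\infty$ cases I would verify $H^2(X, \Theta_X) = 0$ directly. For $\mathbb{P}^2$ this is Kodaira--Nakano vanishing since $\mathbb{P}^2$ is Fano. For a geometrically ruled $\pi\colon X \to C$, I would combine the relative tangent sequence
\[
0 \to \Theta_{X/C} \to \Theta_X \to \pi^*\Theta_C \to 0
\]
with the Leray spectral sequence: the fibres are $\mathbb{P}^1$, so $R^i\pi_*\Theta_{X/C} = R^i\pi_*\mathcal{O}_X = 0$ for $i \ge 1$, while $C$ is one-dimensional, so $H^2(X, \Theta_{X/C})$ and $H^2(X, \pi^*\Theta_C)$ both vanish, giving $H^2(X, \Theta_X) = 0$ and smoothness of $\Def_X$.

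For the Kodaira dimension $0$ cases, K3 surfaces and abelian surfaces satisfy $\omega_X \simeq \mathcal{O}_X$, so $\Def_X$ is smooth by the classical unobstructedness of Calabi--Yau manifolds recalled in the introduction. For Enriques or bielliptic $X$, I would pass to the finite \'etale Galois cover $\pi \colon Y \to X$ with Galois group $G$ that trivializes the torsion canonical bundle $\omega_X$: in the Enriques case $Y$ is a K3 surface, and in the bielliptic case $Y$ is an abelian surface. I would then identify $\Def_X$ with the subfunctor $\Def_Y^G$ of $G$-equivariant deformations of $Y$. For any $A \in Art_k$ and $X_A \in \Def_X(A)$, topological invariance of the \'etale site under nilpotent thickenings produces a canonical \'etale $G$-cover $Y_A \to X_A$ deforming $Y \to X$; conversely $X_A = Y_A / G$ reconstructs $X_A$ from a $G$-equivariant $Y_A$. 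Since $\Def_Y$ is pro-represented by a smooth formal scheme $S$ carrying a natural $G$-action, the fixed locus $S^G$, which pro-represents $\Def_Y^G \simeq \Def_X$, is smooth by Cartan linearization of finite group actions in characteristic zero.

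The main technical step I expect is the equivariant identification $\Def_X \simeq \Def_Y^G$ in the Enriques and bielliptic cases, together with the appeal to Cartan linearization; the remaining ingredients are either direct cohomological vanishings or the Calabi--Yau unobstructedness already cited in the introduction.
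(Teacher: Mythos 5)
Your proposal is correct and follows essentially the same route as the paper: reduce to a relatively minimal model via Lemma~\ref{minred}, then treat the cases of the Enriques--Kodaira classification, checking $H^2(X,\Theta_X)=0$ for $\mathbb{P}^2$ and geometrically ruled surfaces and invoking known unobstructedness for the Kodaira dimension $0$ surfaces. The only difference is that you spell out the Enriques and bielliptic cases (via $G$-equivariant deformations of the K3 or abelian cover and Cartan linearization), which the paper compresses into ``it is well known that these surfaces have unobstructed deformations.''
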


\begin{proof}
By Lemma \ref{minred}, we can assume that $X$ does not contain a $-1$-curve. 

If the Kodaira dimension of $X$ is negative, it is known that $X \simeq \mathbb{P}^2$ or $X \simeq \mathbb{P}_C(\mathcal{E})$ for some projective curve $C$ and a rank $2$ vector bundle 
$\mathcal{E}$ on $C$. 
In these cases, we see that $H^2(X, \Theta_X) = 0$ by the Euler sequence or the argument  in \cite[p.204]{seiler}. 

If the Kodaira dimension of $X$ is zero, it is a $K3$ surface, an Abelian surface or its \'{e}tale quotient. 
It is well known that these surfaces have unobstructed deformations. Hence we are done. 
\end{proof}

\begin{rem}
Kas \cite{kas} gave an example of a smooth projective surface of Kodaira dimension $1$ with an obstructed deformation.  
\end{rem}

\section*{Acknowledgments}
The author would like to thank Professors Osamu Fujino, Yoshinori Gongyo, Andreas H\"{o}ring, Donatella Iacono, Yujiro Kawamata, Marco Manetti, Tatsuhiro Minagawa, Yoshinori Namikawa and Hirokazu Nasu for valuable conversations. 
He thanks the referee for constructive comments. 
He thanks Professor Miles Reid and Michael Selig for improving the presentation of the manuscript and valuable conversations. 
He is partially supported by Warwick Postgraduate Research Scholarship.

\end{document}